\theoremstyle{plain}
\newtheorem{theorem}{Theorem}[section]
\newtheorem{proposition}[theorem]{Proposition}
\newtheorem{definition-lemma}[theorem]{Definition-Lemma}
\theoremstyle{definition}
\newtheorem{definition}[theorem]{Definition}
\newtheorem{remark}[theorem]{Remark}
\newtheorem*{ack*}{Acknowledgements}
\title{Some remarks on the volume of log varieties}
\author{Stefano Filipazzi}
\thanks{2010 {\it Mathematics Subject Classification.} 14E99, 14J40}
\newcommand{\R}{\mathbb{R}}			
\newcommand{\C}{\mathbb{C}}			
\newcommand{\Z}{\mathbb{Z}}			
\newcommand{\rar}{\rightarrow}		
\newcommand{\Xf}{\mathcal{X}}		
\newcommand{\Bf}{\mathcal{B}}		
\DeclareMathOperator{\Spec}{Spec}		
\DeclareMathOperator{\mult}{mult}		
\DeclareMathOperator{\Supp}{Supp}		
\DeclareMathOperator{\vol}{vol}		
\DeclareMathOperator{\coeff}{coeff}	
\def\O#1.{\mathcal {O}_{#1}}			
\def\pr #1.{\mathbb P^{#1}}				
\def\af #1.{\mathbb A^{#1}}				
\def\ses#1.#2.#3.{0\to #1\to #2\to #3 \to 0}		
\def\xrar#1.{\xrightarrow{#1}}			
\def\K#1.{K_{#1}}						
\def\bA#1.{\mathbf{A}_{#1}}				
\def\bM#1.{\mathbf{M}_{#1}}				
\def\bL#1.{\mathbf{L}_{#1}}				
\def\bB#1.{\mathbf{B}_{#1}}				
\def\bK#1.{\mathbf{K}_{#1}}				
\def\subs#1.{_{#1}}						
\def\sups#1.{^{#1}}						
\newcommand{\Addresses}{{
  \bigskip
  \footnotesize

  S.~Filipazzi, \textsc{Department of Mathematics, University of Utah,
    Salt Lake City,\\ UT 84112, USA}\par\nopagebreak
  \textit{E-mail address}: \texttt{filipazz@math.utah.edu}
  
}}
\begin{document}

\selectlanguage{english}

\begin{abstract}
In this note, using methods introduced by Hacon, McKernan and Xu \cite{HMX}, we study the accumulation points of volumes of varieties of log general type. First, we show that, if the set of boundary coefficients $\Lambda$ is DCC, closed under limits and contains 1, then also the corresponding set of volumes is DCC and closed under limits. Then, we consider the case of $\epsilon$-log canonical varieties, for $0 < \epsilon < 1$. In this situation, we prove that, if $\Lambda$ is finite, then the corresponding set of volumes is discrete.
\end{abstract}

\keywords{log canonical volume, accumulation points, $\epsilon$-log canonical}

\maketitle

\tableofcontents



\section{Introduction}

Consider a projective log canonical pair $(X,B)$ of log general type. One of the main birational invariants attached to it is the volume $\vol(X,\K X. + B)$. The study of the values that the volume can attain has deep connections with the study of the boundedness of projective pairs of log general type and their possible semi-log canonical degenerations. In this direction, the main result is the following, first proved by Alexeev in the case of surfaces \cite{Ale94}, and then established in the general case by Hacon, McKernan and Xu \cite{HMX13}.

\begin{theorem}[{\cite[Theorem 1.3.1]{HMX13}}] \label{•}
Fix a positive integer $d$ and a set $\Lambda \subset [0,1]$ which satisfies the DCC. Let $\mathfrak{D}(d,\Lambda)$ be the set of projective log canonical pairs $(X,B)$ such that the dimension of $X$ is $d$ and the coefficients of $B$ belong to $\Lambda$. Then, the set
$$
\lbrace \vol(X,\K X. + B)|(X,B) \in \mathfrak{D}(d,\Lambda) \rbrace
$$
also satisfies the DCC.
\end{theorem}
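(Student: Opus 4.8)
The plan is to argue by contradiction, reducing the DCC statement to an elementary monotonicity property of the volume on a single fixed log smooth pair. If the set of volumes failed the DCC there would be a strictly decreasing infinite sequence $v_1 > v_2 > \cdots$ with $v_i = \vol(X_i, K_{X_i} + B_i)$ for pairs $(X_i, B_i) \in \mathfrak{D}(d,\Lambda)$. Each $K_{X_i} + B_i$ is big (the pairs are of log general type) and $\vol(X_i, K_{X_i}+B_i) \le v_1 =: V$, so the whole discussion takes place among pairs of bounded volume; passing to log canonical models if convenient, I may also assume $K_{X_i}+B_i$ is ample without changing the volume.

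The technical heart is \emph{log birational boundedness}: the class of log canonical pairs $(X,B)$ of dimension $d$, with coefficients in the DCC set $\Lambda$, with $K_X+B$ big and $\vol(X, K_X + B) \le V$, is log birationally bounded. Concretely one produces a projective family $(Z, D) \to T$ over a scheme of finite type, with $D$ reduced, and for each $i$ a point $t_i \in T$ and a birational map $X_i \drar Z_{t_i}$ under which the strict transform of $\Supp B_i$ together with the exceptional divisors lands in $\Supp D_{t_i}$. The DCC hypothesis bounds the nonzero coefficients of $B_i$ uniformly away from $0$, and, combined with the volume bound $V$, this lets one invoke effective birationality for adjoint linear systems to find an integer $m = m(d,\Lambda,V)$ such that $\lfloor m(K_{X_i}+B_i)\rfloor$ defines a birational map of bounded degree onto its image; the images then sweep out a bounded family. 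This is the step I expect to be the main obstacle, both because producing a uniform $m$ when the coefficients of $B_i$ may be irrational requires care (one perturbs, or exploits the finiteness built into the DCC condition), and because one must keep the reduced boundary under control so that the limit remains log smooth.

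Granting log birational boundedness, I pass to a subsequence so that $t_i$ converges and, after a base change and a simultaneous log resolution of the family, arrange that every $X_i$ is related to one fixed log smooth pair $(W, \Sigma)$ with $\Sigma$ reduced, via a common resolution $p_i : Y_i \to X_i$, $q_i : Y_i \to W$. On $Y_i$ set $\Gamma_i := (p_i)^{-1}_* B_i + E_i$, where $E_i$ is the reduced $p_i$-exceptional divisor; since $(X_i, B_i)$ is log canonical all discrepancies are at least $-1$, so $K_{Y_i} + \Gamma_i = p_i^*(K_{X_i}+B_i) + F_i$ with $F_i \ge 0$ effective and $p_i$-exceptional, whence $\vol(Y_i, K_{Y_i} + \Gamma_i) = \vol(X_i, K_{X_i}+B_i) = v_i$, because adding an effective exceptional divisor to the pullback of a big divisor leaves the volume unchanged. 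Pushing forward by $q_i$ I obtain boundaries $\Delta_i := (q_i)_* \Gamma_i$ on $W$ with $\Supp \Delta_i \subseteq \Sigma$ and coefficients in the DCC set $\Lambda \cup \{1\}$; since the set-up guarantees that no component of $B_i$ is $q_i$-contracted and all $q_i$-exceptional divisors occur in $\Gamma_i$ with coefficient $1$, the same exceptional-divisor argument gives $\vol(W, K_W + \Delta_i) = \vol(Y_i, K_{Y_i} + \Gamma_i) = v_i$.

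Now the contradiction is immediate. The boundaries $\Delta_i$ all live on the fixed reduced divisor $\Sigma = \Sigma_1 + \cdots + \Sigma_r$ and their coefficients lie in the DCC set $\Lambda \cup \{1\}$; applying to each of the finitely many components the fact that a sequence in a DCC set admits a non-decreasing subsequence, I may pass to a subsequence along which $\Delta_i \le \Delta_{i+1}$ coefficient-wise. Monotonicity of the volume in the boundary then forces $v_i = \vol(W, K_W + \Delta_i) \le \vol(W, K_W + \Delta_{i+1}) = v_{i+1}$, contradicting $v_1 > v_2 > \cdots$. Hence no strictly decreasing sequence of volumes exists and the set of volumes satisfies the DCC. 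Apart from log birational boundedness, the point demanding the most care is the bookkeeping of the previous paragraph: ensuring that the transported coefficients genuinely remain in a DCC set and that the volume is transported \emph{exactly}, which is where the precise control of the exceptional divisors (and, in the background, the ACC for log canonical thresholds) enters.
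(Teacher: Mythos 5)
This theorem is quoted from Hacon--McKernan--Xu and is not reproved in the paper, so your argument has to be judged on its own terms; the machinery the paper develops for Theorems 1.2 and 1.3 (Proposition \ref{model where ineq descends}, the b-divisors $\mathbf{M}_B$, deformation invariance of plurigenera) shows exactly where your sketch breaks down. The skeleton --- reduce by log birational boundedness to boundaries supported on one fixed log smooth pair, then combine the DCC of the coefficient set with monotonicity of the volume --- is the right one, but the step where you transport the volume \emph{exactly} to the fixed model is wrong as stated, and it is precisely where all the difficulty of the theorem is concentrated.

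Concretely: pushing sections forward along $q_i:Y_i\to W$ only gives $\vol(Y_i,K_{Y_i}+\Gamma_i)\leq \vol(W,K_W+\Delta_i)$. Equality would require $K_{Y_i}+\Gamma_i-q_i^*(K_W+\Delta_i)$ to be effective, i.e. $\Gamma_i\geq \Delta_{i,Y_i}$ where $K_{Y_i}+\Delta_{i,Y_i}=q_i^*(K_W+\Delta_i)$. This fails in general: on a $q_i$-exceptional divisor $E$ that is \emph{not} $p_i$-exceptional (the strict transform of a divisor of $X_i$ contracted by $X_i\dashrightarrow W$), the coefficient of $\Gamma_i$ is a coefficient of $B_i$, possibly $0$, whereas the coefficient of $\Delta_{i,Y_i}$ can be close to $1$ (already for a blow-up of a point where two components of $\Delta_i$ meet). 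So ``the same exceptional-divisor argument'' does not apply in the $q_i$ direction, and in general $\vol(W,K_W+\Delta_i)>v_i$. An inequality in this direction is useless for your contradiction: $v_i\leq w_i$ with $w_i$ nondecreasing is perfectly compatible with $v_i$ strictly decreasing. This is exactly why Proposition \ref{model where ineq descends} of this paper (following Proposition 4.1 of the cited HMX preprint) must pass to a terminalization of $(Z,aD)$ for a carefully chosen $a<1$ before volumes can be compared on the fixed model, and why the DCC of volumes for pairs obtained by blowing up strata of a fixed log smooth pair (Theorem 3.0.1 there) is itself a substantial theorem, proved by induction on the dimension using adjunction and the global ACC, not by the one-line monotonicity you invoke at the end. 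A second, independent gap: ``passing to a subsequence so that $t_i$ converges'' does not let you replace the fibers $\mathcal{X}_{t_i}$ by a single fiber, since the volume is not continuous in $t$ in any naive sense; the correct tool is deformation invariance of log plurigenera (Theorem 1.9.2 of the cited HMX preprint, used in the proof of Theorem \ref{teo accumulation}), which applies only to divisors of the special form $K+\Psi$ with $\Psi$ supported on the relative snc boundary, and is a deep extension theorem that your sketch omits entirely.
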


In recent years, there has been a lot of activity in trying to have a better understanding of the behavior of the volume function for some particular choices of $\Lambda$. In particular, in the case of surfaces, there has been interesting progress towards the study of the minimum and the accumulation points of the set of volumes \cite{AL16,Liu17,AL18,UY}.

The aim of this note is to provide a qualitative description of the behavior of accumulation points in arbitrary dimension. Our first result can be seen as a generalization of \cite[Corollary 1.3]{AL18}.

\begin{theorem} \label{teo accumulation}
Fix a non-negative integer $d$, and a DCC set $\Lambda \subset [0,1]$. Further assume that $1 \in \Lambda$, and that $\overline{\Lambda}= \Lambda$. Denote by $\mathfrak{D}(d,\Lambda)$ the set of $d$-dimensional projective log canonical pairs $(X,B)$ with $\coeff(B) \subset \Lambda$. Also, define
$$
\mathfrak{V}(d,\Lambda) \coloneqq \lbrace \vol(X,\K X. +B)| (X,B) \in \mathfrak{D}(d,\Lambda) \rbrace.
$$
Then, we have $\overline{\mathfrak{V}(d,\Lambda)}=\mathfrak{V}(d,\Lambda)$.
\end{theorem}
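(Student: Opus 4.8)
The plan is to combine the descending chain condition for volumes recalled in Theorem 1.1 with the boundedness results of Hacon--McKernan--Xu, and then to control how the volume varies as the boundary coefficients increase to a limit inside $\Lambda$. The first, soft, observation is that since $\mathfrak{V}(d,\Lambda)$ satisfies the DCC, it contains no infinite strictly decreasing sequence, so every accumulation point of $\mathfrak{V}(d,\Lambda)$ is in fact the limit of a strictly \emph{increasing} sequence of its elements. Thus it suffices to fix an increasing sequence $v_n = \vol(X_n, K_{X_n}+B_n) \nearrow v$ with $(X_n,B_n) \in \mathfrak{D}(d,\Lambda)$ and to produce a single pair in $\mathfrak{D}(d,\Lambda)$ of volume $v$ (the case $v=0$ being immediate, I may assume $v>0$, so $v_n>0$ for large $n$). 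Replacing each $(X_n,B_n)$ by its log canonical model leaves the volume unchanged, so I may assume $K_{X_n}+B_n$ is ample.

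Since $v_n \le v$ for all $n$, the volumes are uniformly bounded, so the boundedness theorem of \cite{HMX13} applies and the pairs $(X_n,B_n)$ form a bounded family. After decomposing the base into finitely many locally closed pieces by Noetherian induction and passing to a subsequence, I may assume there is an irreducible flat family $\pi\colon (\mathcal{X},\mathcal{B}) \to T$ with $\mathcal{B} = \sum_i \mathcal{B}_i$ a sum of prime divisors, together with points $s_n \in T$, so that $(X_n,B_n) = (\mathcal{X}_{s_n}, \sum_i b_{n,i}\mathcal{B}_{i,s_n})$ with $b_{n,i} \in \Lambda$ and with the combinatorial type constant along the subsequence; here the hypothesis $1 \in \Lambda$ guarantees that any reduced components forced by the model construction carry a coefficient lying in $\Lambda$. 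Because $\Lambda$ is DCC, a further subsequence makes each coordinate sequence $(b_{n,i})_n$ non-decreasing, and because $\overline{\Lambda}=\Lambda$ its limit $b_{\infty,i} \coloneqq \lim_n b_{n,i}$ again lies in $\Lambda$.

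It then remains to pass to a single fibre and take the limit. The class $\mathcal{L}_n \coloneqq K_{\mathcal{X}/T} + \sum_i b_{n,i}\mathcal{B}_i$ is ample on the fibre over $s_n$, hence relatively ample over a dense open $U_n \subset T$; working over $\mathbb{C}$, I choose a very general $t \in \bigcap_n U_n$. For this fixed $t$ the class $\mathcal{L}_{n,t}$ is ample, so $\vol(\mathcal{X}_t, K_{\mathcal{X}_t} + \sum_i b_{n,i}\mathcal{B}_{i,t}) = (\mathcal{L}_{n,t})^{d}$, and this intersection number is constant in the flat family, hence equals $\vol(X_n,K_{X_n}+B_n)=v_n$. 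Now I fix the variety $\mathcal{X}_t$ and the divisors $\mathcal{B}_{i,t}$ and vary only the coefficients: the map $(c_i) \mapsto \vol(\mathcal{X}_t, K_{\mathcal{X}_t} + \sum_i c_i \mathcal{B}_{i,t})$ is continuous, so letting $b_{n,i} \to b_{\infty,i}$ yields $\vol(\mathcal{X}_t, K_{\mathcal{X}_t} + \sum_i b_{\infty,i}\mathcal{B}_{i,t}) = \lim_n v_n = v$. Finally, log canonicity is a finite system of non-strict linear inequalities in the coefficients and hence a closed condition; as the pair is log canonical for every $b_{n,\cdot}$, it stays log canonical in the limit $b_{\infty,\cdot}$. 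Since all $b_{\infty,i} \in \Lambda$, the pair $(\mathcal{X}_t, \sum_i b_{\infty,i}\mathcal{B}_{i,t})$ lies in $\mathfrak{D}(d,\Lambda)$ and has volume $v$, whence $v \in \mathfrak{V}(d,\Lambda)$.

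I expect the main obstacle to be the third step, namely controlling the volume uniformly while both the point of the base and the boundary coefficients move; the resolution is to decouple these by evaluating on a single very general fibre, using that for fixed coefficients the volume is an intersection number constant in the family while for a fixed variety the volume depends continuously on the coefficients. A secondary technical point, which the hypotheses $\overline{\Lambda}=\Lambda$ and $1 \in \Lambda$ exist precisely to handle, is that the limiting coefficients (including any reduced components introduced by passing to the bounded family) must remain inside $\Lambda$ so that the limit pair genuinely belongs to $\mathfrak{D}(d,\Lambda)$.
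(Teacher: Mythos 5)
There is a genuine gap, and it is located exactly where you predicted the ``main obstacle'' would not be: in the passage from the sequence $(X_n,B_n)$ to a single flat family in which each pair appears \emph{biregularly} as a fibre. What \cite[Theorem 3.0.1]{HMX} together with \cite[Proposition 1.10.4]{HMX} gives for log canonical pairs with DCC coefficients and bounded volume is only \emph{log birational} boundedness: each $X_n$ is merely birational to a fibre $\Xf_{t_n}$, with $\Bf_{t_n}$ containing the strict transform of $B_n$ \emph{and the exceptional divisors}. Genuine (log) boundedness of the log canonical models is the content of \cite[Theorem 1.6]{HMX14b} and requires an $\epsilon$-log canonicity hypothesis for a fixed $\epsilon>0$ --- this is precisely how the paper's Remark \ref{remark alternative} handles Theorem \ref{teo discrete}, and the remark explicitly flags $\epsilon$-log canonicity as the reason that theorem applies. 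For merely log canonical pairs boundedness fails in general (think of pairs supported on $\mathbb{F}_n$ for all $n$ with boundary coefficients tending to $1$: bounded volume, log birationally bounded, not bounded), so your sentence ``the boundedness theorem of \cite{HMX13} applies and the pairs $(X_n,B_n)$ form a bounded family'' is not available. A telling symptom is that your argument never really uses the hypothesis $1\in\Lambda$: in a genuinely bounded family no new reduced components appear, yet the theorem is expected to fail without $1\in\Lambda$, because the limiting pair typically acquires coefficient-one components coming from divisors that are exceptional for the birational identification with the bounding family.

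Everything downstream of that claim is therefore built on sand, and this is where the actual work of the paper lives. The paper replaces your ``intersection numbers are constant in a flat family'' step by deformation invariance of log plurigenera \cite[Theorem 1.9.2]{HMX}, applied after using \cite[Proposition 4.1]{HMX} to find, for each $i$, a blow-up $\mathcal{W}^{(i)}$ of the bounding family along strata of $\mathbf{M}_{\Bf}$ on whose $i$-th fibre the volume $v_i$ is computed by the divisor $\mathbf{M}_{B_i}$; this transports each $v_i$ to a pair on a blow-up of the \emph{single fixed} log smooth fibre $(Z,D)=(\Xf_1,\Bf_1)$, at the cost of introducing exceptional components with coefficient $1$ (whence $1\in\Lambda$). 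Proposition \ref{model where ineq descends} then produces one fixed model $Z'\to Z$ on which $v_i\le\vol(Z',K_{Z'}+\mathbf{M}_{B_i,Z'})\le v$ for all $i$. Only at that point does the paper run the argument you propose as your last step --- extract a monotone subsequence of boundary divisors on the fixed $Z'$ using the DCC, take the limit divisor (with coefficients in $\Lambda=\overline{\Lambda}$), and invoke continuity of the volume \cite[Theorem 2.2.44]{LAZ1} --- and that part of your proposal does match the paper. To repair your write-up you would need to either justify boundedness (which you cannot in this generality) or rebuild the reduction to a fixed model along the lines of Proposition \ref{model where ineq descends} and \cite[Proposition 4.1]{HMX}.
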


In Theorem \ref{teo accumulation}, in order to achieve closedness we have to require that the coefficients are closed under limits and that $1 \in \Lambda$. Our second result shows that these two phenomena are the only sources of accumulation points for the volume function. More precisely, we prove the following.

\begin{theorem} \label{teo discrete}
Fix a positive number $0 < \epsilon < 1$, a non-negative integer $d$, and a finite set $\Lambda \subset [0,1]$. Denote by $\mathfrak{D}(d,\Lambda,\epsilon)$ the set of $d$-dimensional projective $\epsilon$-log canonical pairs $(X, B)$ with $\coeff(B) \subset \Lambda$. Also, define
$$
\mathfrak{V}(d,\Lambda,\epsilon) \coloneqq \lbrace \vol(X,\K X. +B)| (X,B) \in \mathfrak{D}(d,\Lambda,\epsilon) \rbrace.
$$
Then, $\mathfrak{V}(d,\Lambda,\epsilon)$ is a discrete set.
\end{theorem}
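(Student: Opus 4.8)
The plan is to prove that $\mathfrak{V}(d,\Lambda,\epsilon)$ is locally finite, that is, that $\mathfrak{V}(d,\Lambda,\epsilon)\cap[0,V]$ is finite for every $V>0$; this at once gives that the set is discrete (indeed, closed and discrete). Since $\Lambda$ is finite it satisfies the DCC, so applying the theorem of Hacon--McKernan--Xu \cite[Theorem 1.3.1]{HMX13} with the same $d$ and $\Lambda$ shows that $\mathfrak{V}(d,\Lambda)\supseteq\mathfrak{V}(d,\Lambda,\epsilon)$ satisfies the DCC. In particular $\mathfrak{V}(d,\Lambda,\epsilon)$ admits no infinite strictly decreasing sequence, so no accumulation point can be approached from above. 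Thus the entire content is to exclude accumulation from below, for which it suffices to establish the claimed finiteness of $\mathfrak{V}(d,\Lambda,\epsilon)\cap[0,V]$.

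Fix $V>0$ and take $(X,B)\in\mathfrak{D}(d,\Lambda,\epsilon)$ with $\vol(\K X.+B)\le V$. I would first pass to the log canonical model $(X^{c},B^{c})$: since $\epsilon>0$ forces the pair to be klt, this model exists, it does not change the volume, it keeps the coefficients in $\Lambda$, and it preserves the $\epsilon$-log canonical condition (discrepancies do not decrease along a $(\K X.+B)$-MMP), while making $\K {X^{c}}.+B^{c}$ ample. Hence I may assume $\K X.+B$ is ample. The key external input is now the boundedness theorem of Hacon--McKernan--Xu \cite{HMX13}: the family of $\epsilon$-log canonical pairs $(X,B)$ of dimension $d$ with $\K X.+B$ ample, $\coeff(B)\subset\Lambda$, and $\vol(\K X.+B)\le V$ is bounded. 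Concretely, there is a projective morphism $\mathcal{X}\to T$ with $T$ of finite type and a reduced divisor $\mathcal{D}\subset\mathcal{X}$ such that every such $(X,\Supp B)$ arises as a fiber $(\mathcal{X}_{t},\mathcal{D}_{t})$ for a closed point $t\in T$.

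It remains to see that within such a bounded family, and with coefficients confined to the finite set $\Lambda$, only finitely many volumes occur. After stratifying $T$ into finitely many locally closed pieces and performing a finite \'etale base change, I may assume that the irreducible components of $\mathcal{D}$ are flat over each connected component of $T$; writing $B=\sum b_i D_i$ with $b_i\in\Lambda$ is then a choice among finitely many boundaries, since $\Lambda$ is finite. For a fixed connected base and a fixed coefficient vector $(b_i)$, the divisor $\K {\mathcal{X}_t}.+\sum b_i \mathcal{D}_{i,t}$ is ample, so $\vol(\K {X_t}.+B_t)=(\K {X_t}.+B_t)^{d}$ is an intersection number that is deformation invariant (constancy of Hilbert polynomials in flat families over a connected base), hence constant in $t$. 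Therefore $\mathfrak{V}(d,\Lambda,\epsilon)\cap[0,V]$ is finite, as desired.

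The main obstacle is the boundedness step: it is precisely here that the hypothesis $\epsilon>0$ is indispensable, since merely log canonical (as opposed to $\epsilon$-log canonical) pairs of bounded volume need not form a bounded family. This is exactly the reason why Theorem~\ref{teo accumulation} can only guarantee closedness rather than discreteness. Granting boundedness, the passage from a bounded family with finite coefficient set to finitely many volumes is a soft argument via constancy of intersection numbers in flat families, as sketched above.
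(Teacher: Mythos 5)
Your argument is correct, but it is not the paper's main proof: it is essentially the alternative strategy recorded in Remark \ref{remark alternative} and attributed there to Chen Jiang. The paper's own proof deliberately avoids the boundedness theorem for $\epsilon$-log canonical models. It uses only \emph{log birational} boundedness of the pairs $(X_i,B_i)$ (via \cite[Theorem 3.0.1 and Proposition 1.10.4]{HMX}), transports all the data to a single fiber of a log smooth family by deformation invariance of plurigenera (\cite[Theorem 1.9.2]{HMX}), and then proves by hand, via an explicit terminalization argument over a \emph{fixed} log smooth pair $(Z,D)$, that only finitely many volumes occur (Proposition \ref{finite over fixed model}). Your route instead passes to log canonical models --- correctly noting that $\epsilon>0$ gives klt, that the MMP preserves volume, coefficients and the $\epsilon$-log canonical condition since discrepancies do not decrease --- and then invokes genuine log boundedness of the resulting family; the relevant reference is \cite[Theorem 1.6]{HMX14b} (the boundedness-of-moduli paper), not \cite{HMX13} as you cite, and as usually stated it takes log birational boundedness plus $\epsilon$-log canonicity plus ampleness as input, so you should still record the log birational boundedness step. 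What each approach buys: yours is shorter and conceptually cleaner but rests on a substantially deeper boundedness theorem; the paper's is more self-contained relative to \cite{HMX} and runs parallel to the proof of Theorem \ref{teo accumulation}. Your final step --- finitely many volumes from a bounded family with coefficients in the finite set $\Lambda$, via local constancy of the intersection numbers $(K_{\mathcal{X}_t}+\textstyle\sum b_i\mathcal{D}_{i,t})^{d}$ over a stratified base --- is exactly the ``soft argument'' the Remark leaves implicit, and your sketch of it is adequate.
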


The proofs of both Theorem \ref{teo accumulation} and Theorem \ref{teo discrete} rely on a careful analysis of arguments by Hacon, McKernan and Xu \cite{HMX}. In particular, one first restricts the attention to a fixed birational class and proves the desired statement in this special setup. Then, by log birational boundedness and deformation invariance of plurigenera, one can deduce the complete statement from the special case.

After the first version of this work was completed, Chen Jiang showed us a different and direct approach to Theorem \ref{teo discrete}. This strategy is discussed in Remark \ref{remark alternative}.

\begin{ack*}
The author would like to thank his advisor Christopher Hacon for bringing these questions to his attention. He would also like to thank Chen Jiang for pointing out an alternative strategy to prove Theorem \ref{teo discrete}.
Finally, he would like to express his gratitude to the anonymous referee for the careful report and the many suggestions.
The author was partially supported by NSF research grants no: DMS-1300750, DMS-1265285 and by a grant from the Simons Foundation; Award Number: 256202.
\end{ack*}

\section{Preliminaries}

In this paper, we work over the field of complex numbers $\C$. In particular, all the constructions and statements have to be understood in this setting. Now, we recall a few definitions that will be relevant in the following.

\begin{definition}
Let $X$ be a normal projective variety. A {\it boundary} $B$ is an effective $\R$-divisor with coefficients in $[0,1]$ such that $K_X+B$ is $\R$-Cartier. We denote by $\coeff(B)$ the {\it set of coefficients} used to write $B$ as a combination of prime Weil divisors. A {\it log pair} (or simply a {\it pair}) $(X,B)$ is the datum of a normal projective variety and a boundary.
\end{definition}

Now, we recall the classic measure of singularities for log pairs. Consider a log pair $(X,B)$, and let $f:Y \rightarrow X$ be a proper birational morphism from a normal variety $Y$. Choose the canonical divisor $\K Y.$ such that $f_* \K Y. = \K X.$. Then, we define a divisor $B_Y$ by the equation
$$
K_Y+B_Y \coloneqq f^*(K_X+B).
$$
Fix $0\leq \epsilon \leq 1$. We say that $(X,B)$ is {\it $\epsilon$-log canonical} (respectively {\it Kawamata log terminal}) if $\mult_P B_Y \leq 1- \epsilon$ (respectively $\mult_P B_Y < 1$) for every prime divisor $P \subset Y$ and every $f:Y \rar X$ as above. If $\epsilon=0$, we drop it from the notation, and say that $(X,B)$ is log canonical.

Now, we recall the language of b-divisors, as they come up naturally in the setup of the proofs in this note.

\begin{definition}
Let $X$ be a normal variety, and consider the set of all proper birational morphisms $\pi: X_\pi \rightarrow X$, where $X_\pi$ is normal. This is a partially ordered set, where $\pi' \geq \pi$ if $\pi'$ factors through $\pi$. We define the space of {\it Weil b-divisors} as the inverse limit
$$
\mathbf{Div}(X)\coloneqq \varprojlim_\pi \mathrm{Div}(X_\pi),
$$
where $\mathrm{Div}(X_\pi)$ denotes the space of Weil divisors on $X_\pi$. Then, we define the space of {\it $\R$-Weil b-divisors} as $\mathbf{Div}(X)_\R \coloneqq \mathbf{Div}(X)\otimes_\Z \R$.
\end{definition}

For a more detailed discussion of b-divisors in this setting, see \cite{HMX}. Here, we will just recall the constructions that will appear subsequently. Consider a log pair $(X,B)$. Fix $0 \leq \epsilon \leq 1$. Since a b-divisor $\mathbf{D}$ is determined by its corresponding traces $\mathbf{D}_Y$ on each birational model $Y \rightarrow X$, we can define b-divisors $\mathbf{M}_B^\epsilon$ and $\mathbf{L}_B$ as follows. For every birational morphism $\pi : Y \rightarrow X$, define $B_Y$ by $K_Y+B_Y : =\pi^*(K_X+B)$. Then, we declare
$$
\mathbf{M}^\epsilon_{B,Y} \coloneqq \pi_*^{-1}B+(1-\epsilon)\mathrm{Ex}(\pi), \quad \mathbf{L}_{B,Y} \coloneqq B_Y \vee 0.
$$
Here $\vee$ denotes the maximum between two divisors; similarly, we will use $\wedge$ to denote the minimum between two divisors. In case $\epsilon=0$, we drop it from the notation and write $\mathbf{M}_B \coloneqq \mathbf{M}_B^0$. The b-divisor $\mathbf{M}_B$ encodes the information of the strict transforms and the exceptional divisors on all higher models, while $\mathbf{L}_B$ records the effective part of the sub-boundaries obtained by pulling back $K_X+B$ to any higher model.

As last piece of terminology, we recall the notions of {\it boundedness} and {\it log birational boundedness}.

\begin{definition} \label{definizione}
Let $\mathfrak{D}$ be a set of log pairs. We say that $\mathfrak{D}$ is log bounded (respectively log birationally bounded) if there is a log pair $(\Xf,\Bf)$ with $\Bf$ reduced, and there is a projective morphism $\Xf \rar T$, where $T$ is of finite type, such that for every log pair $(X,B) \in \mathfrak{D}$ there is a closed point $t \in T$ and a morphism $f : \Xf_t \rar X$ inducing an isomorphism $(X,B \subs \mathrm{red}.) \cong (\Xf_t,\Bf_t)$ (respectively, a birational map such that the support of $\Bf_t$ contains the support of the strict transform of $B$ and of any $f$-exceptional divisor).
\end{definition}

\begin{remark}
Let $\mathfrak{D}$ be a log (birationally) bounded family of pairs, and let $\mathcal{X} \rightarrow T$ be a bounding family.
Up to stratifying the base, we can assume that $T$ is smooth.
\end{remark}

\section{Proof of main results}

In this section, we prove the main results of this note. First, we focus on a special case of Theorem \ref{teo accumulation}. We say that a pair $(X,B)$ with a morphism $X \rar U$ is called {\it log smooth over} $U$ if $U$ is smooth, $B$ has simple normal crossing support, and every stratum of $(X,\Supp(B))$, including $X$, is smooth over $U$. In case $U = \Spec \C$, we just say that $(X,B)$ is log smooth. The following is a slight generalization of \cite[Proposition 4.1]{HMX}, and its proof goes through almost verbatim.

\begin{proposition} \label{model where ineq descends}
Fix a positive number $v$, a non-negative integer $d$, and a DCC set $\Lambda \subset [0,1]$. Let $(Z,D)$ be a projective log smooth $d$-dimensional pair where $D$ is reduced. Then, there exists $f:Z' \rightarrow Z$, a finite sequence of blow-ups along strata of the b-divisor $\mathbf{M}_D$, such that if
\begin{itemize}
\item $(X,B)$ is a projective log smooth $d$-dimensional pair;
\item $g : X \rightarrow Z$ is a finite sequence of blow-ups along strata of $\mathbf{M}_D$;
\item $\coeff (B) \subset \Lambda$;
\item $g_*B \leq D$;
\item $\vol (X,K_X+B) \leq v$;
\end{itemize}
then $\vol(Z',K_{Z'}+\mathbf{M}_{B,Z'}) \leq v$.
\end{proposition}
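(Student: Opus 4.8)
The plan is to reduce the statement to the birational invariance of the volume of $K+\mathbf{M}_B$ along towers of blow-ups of strata, and then to pass between $X$ and the fixed model $Z'$ through a common model. First I would record the shape of the relevant traces: on any $\pi\colon Y\to X$ the divisor $\mathbf{M}_{B,Y}$ is the strict transform of $B$ together with every $\pi$-exceptional divisor taken with coefficient $1$, so that $\mathbf{M}_{B,X}=B$ and, pushing forward, $\mathbf{M}_{B,Z}=g_*B\le D$. Since both $g\colon X\to Z$ and $f\colon Z'\to Z$ are sequences of blow-ups along strata of $\mathbf{M}_D$, and such blow-ups preserve log smoothness and the stratification, the support of $\mathbf{M}_{B,Y}$ on every intermediate model is contained in the simple normal crossing divisor $\mathbf{M}_{D,Y}$, with the coefficients of the strict transform lying in $\Lambda\cap[0,1]$ and the exceptional coefficients all equal to $1$.

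The heart of the argument is a one-step invariance lemma. For a blow-up $\pi\colon Y'\to Y$ along a stratum $S$ of $\mathbf{M}_{D,Y}$ of codimension $k$, with exceptional divisor $E$, I would compute
\[
K_{Y'}+\mathbf{M}_{B,Y'}=\pi^*(K_Y+\mathbf{M}_{B,Y})+c\,E,\qquad c=k-\mult_S\mathbf{M}_{B,Y}=k-\sum_{i=1}^{k}b_i,
\]
where $b_1,\dots,b_k$ are the coefficients of the $k$ components of $\mathbf{M}_{B,Y}$ meeting cleanly along $S$. Since each $b_i\le 1$ we get $c\ge 0$, so $c\,E$ is effective and $\pi$-exceptional; adding an effective $\pi$-exceptional divisor to a pullback leaves the volume unchanged, hence $\vol(Y',K_{Y'}+\mathbf{M}_{B,Y'})=\vol(Y,K_Y+\mathbf{M}_{B,Y})$. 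Iterating along the tower shows that the volume of $K+\mathbf{M}_B$ is constant under blow-ups of strata; in particular $\vol(X,K_X+B)=\vol(X,K_X+\mathbf{M}_{B,X})=\vol(Z,K_Z+\mathbf{M}_{B,Z})$.

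To conclude, given a member $(X,B)$ and the fixed $Z'$, I would form the join $W$ of the two towers over $Z$, which is again a sequence of blow-ups of strata dominating both $X$ and $Z'$. Applying the invariance to $W\to X$ and to $W\to Z'$ yields
\[
\vol(Z',K_{Z'}+\mathbf{M}_{B,Z'})=\vol(W,K_W+\mathbf{M}_{B,W})=\vol(X,K_X+B)\le v,
\]
which is the desired bound. In fact this shows the inequality holds for essentially any choice of $Z'$; following \cite{HMX} one fixes a single model $Z'$, obtained by a specific finite sequence of blow-ups of strata, that works uniformly for all members of the family, and it is here that the DCC hypothesis on $\Lambda$ together with the bound $v$ enter, to guarantee that finitely many blow-ups suffice.

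I expect the main obstacle to be twofold. First, one must check that the join $W$ really is a tower of blow-ups of strata over both $X$ and $Z'$, so that the one-step lemma applies on each side; this amounts to verifying that strata of $\mathbf{M}_D$ transform to unions of strata under blow-ups of strata and that log smoothness persists, so that the multiplicity computation giving $c\ge 0$ remains valid on every intermediate model. Second, and more delicate, is the uniform choice of $Z'$: one must argue, using the DCC of $\Lambda$ and $\vol\le v$, that only boundedly many blow-up patterns are relevant across the whole family, so that a single finite $Z'$ serves them all. This last point is the part that follows \cite[Proposition 4.1]{HMX} almost verbatim.
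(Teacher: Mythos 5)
Your argument breaks at the equality $\vol(Z',K_{Z'}+\mathbf{M}_{B,Z'})=\vol(W,K_W+\mathbf{M}_{B,W})$, and already at the earlier claim $\vol(X,K_X+B)=\vol(Z,K_Z+\mathbf{M}_{B,Z})$. Your one-step computation is correct only for blow-ups lying \emph{over} $X$: there the new exceptional divisor enters $\mathbf{M}_B$ with coefficient $1$, so $c=(k-1)+1-\sum b_i\geq 0$. On the tower $W\to Z'$ (or $X \to Z$), a divisor can be exceptional over $Z'$ while being an honest divisor of $X$; the b-divisor $\mathbf{M}_B$ assigns to it its actual coefficient $b_E=\mult_E B$ rather than $1$, so the relevant discrepancy is $c=(k-1)+b_E-\sum b_i$, which can be negative (take $k=2$, $b_E=0$, $b_1=b_2=1$). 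Hence in general one only gets the \emph{inequality} $\vol(Z',K_{Z'}+\mathbf{M}_{B,Z'})\geq\vol(X,K_X+B)$ --- this is part (1) of \cite[Lemma 1.5.1]{HMX}, used in exactly that direction in the proof of Theorem \ref{teo accumulation} --- and equality genuinely fails. For example, let $Z$ be a surface with $K_Z+D$ ample, $D=D_1+D_2$ meeting at a point $p$, let $g:X\to Z$ be the blow-up of $p$ with exceptional curve $E$, and let $B=\widetilde{D}_1+\widetilde{D}_2$. Then $K_X+B=g^*(K_Z+D)-E$, so $\vol(X,K_X+B)\leq (K_Z+D)^2-1<(K_Z+D)^2=\vol(Z,K_Z+\mathbf{M}_{B,Z})$, since sections of $m(g^*(K_Z+D)-E)$ are sections of $m(K_Z+D)$ vanishing to order $m$ at $p$. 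Your own closing observation that the argument would work ``for essentially any choice of $Z'$'' is the symptom of the error: if that were so, the statement would need neither the DCC hypothesis nor the bound $v$.

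The actual role of the DCC of $\Lambda$ and of $v$ is to control precisely this possible increase of volume when descending from $X$ to a suitably chosen fixed model. The paper's proof goes as follows: by \cite[Theorem 3.0.1]{HMX} the set of achievable volumes satisfies the DCC, so there is $\delta>0$ such that no volume lies in $(v,v+\delta]$; by \cite[Theorem 3.2.1]{HMX} there is a uniform $r$ with $K_X+\frac{r-1}{r}B$ big, which produces a scaling factor $a=1-\sigma/r$ with $\vol(X,K_X+aB)>\frac{v}{v+\delta}\vol(X,K_X+B)$. One then takes $Z'\to Z$ to be a terminalization of the klt pair $(Z,aD)$; terminality of $(Z',\Phi)$ forces $\vol(Z',K_{Z'}+a\mathbf{M}_{B,Z'})\leq\vol(X,K_X+B)$ via parts (2) and (3) of \cite[Lemma 1.5.1]{HMX}, and combining the two estimates gives $\vol(Z',K_{Z'}+\mathbf{M}_{B,Z'})\leq v+\delta$, whence $\leq v$ by the choice of $\delta$. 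None of these ingredients (the gap $\delta$, the uniform $r$, the terminalization of the perturbed pair) appears in your proposal, so the problem is not a technicality about the join of the two towers but a missing central idea.
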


\begin{proof}
We can assume that $1 \in \Lambda$. Let $\mathfrak{P}$ be the set of pairs $(X,B)$ over $(Z,D)$ that satisfy the first four of the five hypotheses of the statement. Then, define
\begin{equation*}
\mathfrak{W}  \coloneqq  \lbrace \vol(X,K_X+B)|(X,B)\in \mathfrak{P} \rbrace.
\end{equation*}
By \cite[Theorem 3.0.1]{HMX}, $\mathfrak{W}$ satisfies the DCC. Therefore, there is a constant $\delta > 0$ such that, if $\vol(X,K_X+B) \leq v + \delta$, then $\vol(X,K_X+B) \leq v$. Also, by \cite[Theorem 3.2.1]{HMX}, there exists an integer $r$ such that, if $(X,B) \in \mathfrak{P}$ and $K_X+B$ is big, then $K_X+\frac{r-1}{r}B$ is big as well. Now, fix $\sigma > 0$ such that $(1-\sigma)^d> \frac{v}{v+\delta}$, and define $a  \coloneqq  1-\frac{\sigma}{r}$.

Then, we have the following chain of inequalities
\begin{equation} \label{equation with fractions}
\begin{split}
\vol(X,K_X+aB) &\geq \vol (X,(1-\sigma)(K_X+B))\\
&=(1-\sigma)^d \vol(X,K_X+B)\\
&> \frac{v}{v+\delta}\vol(X,K_X+B),
\end{split}
\end{equation}
where the first inequality comes from the identity
\begin{equation*}
K_X+aB= (1-\sigma)(K_X+B)+\sigma \left( K_X+\frac{r-1}{r}B  \right) 
\end{equation*}
and $K_X+\frac{r-1}{r}B$ being big.

Now, since $(Z,aD)$ is Kawamata log terminal, we can obtain a terminalization $f:Z'\rightarrow Z$ by blowing up strata of $\mathbf{M}_D$. We can write
\begin{equation*}
K_{Z'}+\Psi = f^*(K_Z+aD)+E,
\end{equation*}
where $\Psi$ and $E$ are effective, $\Psi \wedge E =0$, and $(Z',\Psi)$ is terminal.

Let $\mathfrak{F}$ denote the set of pairs $(X,B)$ satisfying all the assumptions in the statement, and such that the rational map $\phi: X \dashrightarrow Z'$ is a morphism. Fix $(X,B) \in \mathfrak{F}$, and define $B_{Z'} \coloneqq \phi_*B$. Then, by construction, we have $f_*(aB_{Z'})\leq aD$. Therefore, if we write
\begin{equation*}
K_{Z'}+\Phi= f^*(K_Z +f_* (aB_{Z'}))+F,
\end{equation*}
where $\Phi$ and $F$ are effective with $\Phi \wedge F =0$, then $(Z',\Phi)$ is terminal. Hence, it follows that
\begin{equation} \label{equationTerminalization1}
\begin{split}
\vol(Z',K_{Z'}+aB_{Z'}) &= \vol(Z',K_{Z'}+aB_{Z'}\wedge \Phi)\\
&= \vol(X,K_{X}+\phi_*^{-1}(aB_{Z'}\wedge \Phi) )\\
& \leq \vol(X,K_{X}+B),
\end{split}
\end{equation}
where the first equality follows from part (3) of \cite[Lemma 1.5.1]{HMX}, the second one from part (2) of \cite[Lemma 1.5.1]{HMX} and $(Z',aB_{Z'}\wedge \Phi)$ being terminal, and the last inequality from $\phi_*^{-1}(aB_{Z'}\wedge \Phi) \leq B$.

Then, we get the following chain of inequalities
\begin{equation*} \label{equationTerminalization2}
\vol(Z',K_{Z'}+B_{Z'}) \leq \frac{v + \delta}{v} \vol(Z',K_{Z'}+aB_{Z'}) \leq v+\delta,
\end{equation*}
where the first one follows from inequality \eqref{equation with fractions} and the second one from inequality \eqref{equationTerminalization1}. Therefore, by definition of $\delta$, we have $\vol(Z',K_{Z'}+B_{Z'}) \leq v$.

To conclude the proof, it is enough to notice that, if $(X,B)$ satisfies the assumptions in the statement, then after blowing up along finitely many strata of $\mathbf{M}_D$ and replacing $B$ by its strict transform plus the exceptional divisors, we may assume that $(X,B) \in \mathfrak{F}$.
\end{proof}

Now, we are ready to prove Theorem \ref{teo accumulation}.

\begin{proof}[Proof of Theorem \ref{teo accumulation}]
By \cite[Theorem 3.0.1]{HMX}, we know that $\mathfrak{V}(d,\Lambda)$ is a DCC set. Therefore, to check that $\overline{\mathfrak{V}(d,\Lambda)}=\mathfrak{V}(d,\Lambda)$, we are left with considering sequences of volumes that are strictly increasing and bounded from above.

Thus, let $\lbrace v_i \rbrace \subs i \geq 1. \subset \mathfrak{V}(d,\Lambda)$ be a strictly increasing sequence with limit $v < + \infty$. Our goal is to show that $v \in \mathfrak{V}(d,\Lambda)$. Also, let $\lbrace (X_i,B_i) \rbrace \subs i \geq 1. \subset \mathfrak{D}(d,\Lambda)$ be a sequence of pairs such that $\vol(X_i,\K X_i. + B_i)=v_i$.

By \cite[Theorem 3.0.1]{HMX} and \cite[Proposition 1.10.4]{HMX}, the pairs $(X_i,B_i)$ are log birationally bounded.
Up to passing to a subsequence, we may assume that there exists a bounding family $(\Xf,\Bf) \rar T$ that is log smooth over $T$ with $T$ irreducible. We will write $(\Xf_{i},\Bf_{i})$ for the pair corresponding to $(X_i,\Supp(B_i))$ in the sense of Definition \ref{definizione}.

Now, by \cite[Proposition 4.1]{HMX}, for each $i$ there exists a sequence of blow-ups along strata of $\mathbf{M}_{\mathcal{B}_i}$, denoted by $\mathcal{W}_i\sups (i). \rightarrow \mathcal{X}_i$, satisfying the following property: Given any higher model $(\tilde{\mathcal{X}}_i,\tilde{B}) \xrightarrow{\alpha} \mathcal{X}_i$ obtained by blow-ups along strata of $\mathbf{M}_{\mathcal{B}_i}$ and such that $\alpha_ * \tilde{B} \leq \mathcal{B}_i$, $\tilde{B} \in \Lambda$ and $\vol(\tilde{\mathcal{X}}_i,K_{\tilde{\mathcal{X}}_i}+\tilde{B}) = v_i$, then we have the equality $\vol(\mathcal{W}_i \sups (i).,K_{\mathcal{W}_i\sups (i).}+\mathbf{M}_{\tilde{B},\mathcal{W}_i\sups (i).}) = v_i$. 

Since $(\mathcal{X},\mathcal{B})$ is log smooth over $T$, we can assume that $\mathcal{W}_i \sups (i).$ appears as fiber of a sequence of blow-ups $p: \mathcal{W}\sups (i).\rightarrow \mathcal{X}$ along strata of $\mathbf{M}_{\mathcal{B}}$. Then, let $\Psi \sups (i).$ be the unique divisor supported on $\mathbf{M}_{\mathcal{B},\mathcal{W} \sups (i).}$ such that $\Psi \sups (i)._i = \bM B_i, \mathcal{W}_i^{(i)}.$. Then, we have
$$
v_i=\vol(\mathcal{W}\sups(i)._i,\K \mathcal{W}^{(i)}_i. + \Psi \sups (i)._i) = \vol(\mathcal{W}\sups(i)._1,\K \mathcal{W}^{(i)}_1. + \Psi \sups (i)._1).
$$
Here the second equality holds by \cite[Theorem 1.9.2]{HMX}, while the first one follows from \cite[proof of Corollary 2.1.3]{HMX}.

Thus, up to replacing $(X_i,B_i)$ with $(\mathcal{W}\sups(i)._1,\Psi \sups (i)._1)$, we may assume that we are in the situation of Proposition \ref{model where ineq descends}. In particular, $(\Xf_1,\Bf_1)$ plays the role of the model $(Z,D)$ appearing in the statement. By abusing notation, we will denote $(\Xf_1,\Bf_1)$ by $(Z,D)$.

Now, let $Z' \rightarrow Z$ be the sequence of blow-ups along strata of $\mathbf{M}_{D}$ constructed in Proposition \ref{model where ineq descends} relative to the volume $v$. Then, by Proposition \ref{model where ineq descends} and part (1) of \cite[Lemma 1.5.1]{HMX}, we have
$$
v_i \leq \vol(Z',\K Z'. + \bM B_i,Z'.) \leq v
$$
for all $i \geq 1$.

As the coefficients of $\bM B_i,Z'.$ are in the DCC set $\Lambda$, up to passing to a subsequence, we may assume $\bM B_i,Z'. \leq \bM B_{i+1},Z'.$ for all $i \geq 1$. Thus, we have a well defined limit divisor $B \subs \infty. \coloneqq \lim \subs i \to + \infty. \bM B_i,Z'.$. Since $\Lambda = \overline{\Lambda}$, we have $\coeff(B \subs \infty.) \subset \Lambda$.

Now, as $B \subs \infty. \geq \bM B_i,Z'.$, we have $v_i \leq \vol(Z',\K Z'. + B \subs \infty.)$ for all $i \geq 1$. Also, by continuity of the volume function \cite[Theorem 2.2.44]{LAZ1}, the inequality $\vol(Z',\K Z'. + B \subs \infty.) \leq v$ holds. Then, since $\lim \subs i \to + \infty. v_i = v$, we conclude $\vol(Z',\K Z'. + B \subs \infty.)=v$. Thus, the claim follows.
\end{proof}

Now, we change the focus and move towards the proof of Theorem \ref{teo discrete}. The strategy of the proof is similar to the one of Theorem \ref{teo accumulation}. Thus, we will start by proving a special version of the statement.

\begin{proposition} \label{finite over fixed model}
Fix a non-negative number $0 \leq \epsilon < 1$, a non-negative integer $d$, and a finite set $\Lambda \subset [0,1]$. Let $(Z,D)$ be a projective log smooth $d$-dimensional pair where $D$ is reduced. Consider the set of pairs $\mathfrak{Z}$ satisfying the following conditions:
\begin{itemize}
\item $(X,B)$ is a projective log smooth $d$-dimensional pair;
\item $f : X \rightarrow Z$ is a finite sequence of blow-ups along strata of $\mathbf{M}_D$;
\item $\coeff (B) \subset \Lambda$;
\item $f_*B \leq D$;
\item $(X,B)$ is $\epsilon$-log canonical;
\end{itemize}
then $\mathfrak{V}\coloneqq \lbrace \vol(X,K_{X}+B)|(X,B) \in \mathfrak{Z}\rbrace$ is finite.
\end{proposition}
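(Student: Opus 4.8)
The plan is to reduce the computation of every volume in $\mathfrak{V}$ to a single fixed model dominating $Z$, on which only finitely many boundaries can occur, in close analogy with the role played by Proposition \ref{model where ineq descends} in the proof of Theorem \ref{teo accumulation}. Before constructing that model I would record the two soft facts that frame the problem: by \cite[Theorem 3.0.1]{HMX} the set $\mathfrak{V}$ satisfies the DCC, and since $f_*B \leq D$ and blow-ups along strata of $\mathbf{M}_D$ are crepant for $(Z,D)$, every element of $\mathfrak{V}$ is bounded above by $\vol(Z,K_Z+D)$. Thus $\mathfrak{V}$ is a bounded DCC set, so finiteness amounts to ruling out strictly increasing sequences; I will achieve this by exhibiting a fixed target on which the volumes are visibly finite in number.

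Mimicking the terminalization step of Proposition \ref{model where ineq descends}, I would fix $a \in (0,1)$ calibrated against $\epsilon$ and $\Lambda$. Since $(Z,aD)$ is Kawamata log terminal, it admits a terminalization $g:Z' \to Z$ obtained by a finite sequence of blow-ups along strata of $\mathbf{M}_D$. The essential gain from working with the klt pair $(Z,aD)$ is that only finitely many divisorial valuations have negative discrepancy over it, so $Z'$ is a single fixed model depending only on $(Z,D)$, $\epsilon$ and $\Lambda$; in particular it carries only finitely many prime divisors that can support a boundary, namely the components of $g_*^{-1}D$ together with the finitely many $g$-exceptional divisors.

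Given $(X,B)\in\mathfrak{Z}$, the heart of the argument is to descend $\vol(X,K_X+B)$ to a volume $\vol(Z',K_{Z'}+B_{Z'})$ with $B_{Z'}$ drawn from a fixed finite set. Passing to a common resolution of $X$ and $Z'$, using that the volume is a birational pullback invariant together with the behaviour of the volume on crepant and terminal models (parts $(2)$ and $(3)$ of \cite[Lemma 1.5.1]{HMX}), I would produce such a $B_{Z'}$. The condition $f_*B \leq D$ forces the strict-transform part of $B_{Z'}$ to have coefficients in $\Lambda$ supported on components of $g_*^{-1}D$, while the coefficients along the $g$-exceptional divisors must be shown to lie in a fixed finite set. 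This is where the hypotheses are used decisively: because $\Lambda$ is finite, each coefficient of $B$ is either equal to $1$, in which case the corresponding divisor is a crepant log canonical place and the identity $K_{Z'}+\mathbf{M}_{D,Z'}=g^*(K_Z+D)$ absorbs it without changing the volume, or it is bounded away from $1$, in which case the $\epsilon$-log canonical condition bounds its discrepancy and pins its contribution down to finitely many possibilities. Once this is in place, $Z'$ carries only finitely many available prime divisors, each coefficient ranges over a fixed finite set, so there are finitely many boundaries $B_{Z'}$, and therefore $\mathfrak{V}$ is finite.

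The main obstacle I anticipate is precisely this descent with finite data: controlling the exceptional divisors of $f:X\to Z$ that are \emph{not} extracted on $Z'$ — the deep valuations of large log discrepancy over $(Z,aD)$ — and proving that they contribute no new volume values. Morally these divisors either enter $B$ with coefficient $1$, so that the crepant identity above makes them invisible to the volume, or with coefficient at most $1-\epsilon$, so that the discrepancy bound forces a uniformly controlled contribution; turning this dichotomy into an exact finiteness statement, rather than the mere inequality that sufficed in Proposition \ref{model where ineq descends}, is the crux of the proof.
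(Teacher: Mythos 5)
Your overall strategy --- descend every volume to a fixed terminalization dominating $Z$ on which only finitely many boundaries can occur --- is the same as the paper's, but the step you yourself identify as the crux is exactly the one you leave unproved, and the mechanism you sketch for it does not work. First, for $\epsilon>0$ the dichotomy you propose is partly vacuous: a log smooth $\epsilon$-log canonical pair has all coefficients at most $1-\epsilon$, so there are no coefficient-$1$ components to ``absorb'' via the crepant identity. Second, and more seriously, for the divisors on $X$ lying over $Z$ but not extracted on your fixed model $Z'$, saying that the $\epsilon$-log canonical condition ``bounds their discrepancy and pins their contribution down to finitely many possibilities'' is not an argument: there are infinitely many such valuations, each may appear in $B$ with any coefficient from $\Lambda$, and a uniformly bounded contribution is not the same as no contribution to the volume. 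What is needed, and what the paper proves, is an exact equality of volumes that allows one to set those coefficients to zero.

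The paper's mechanism is the following. Since all coefficients of $B$ are at most $1-\epsilon$, one has $f_*B\leq(1-\epsilon)D$, so one takes $Z'\to Z$ to be a terminalization of the klt pair $(Z,(1-\epsilon)D)$ (no calibration of an auxiliary $a$ is needed). By part (3) of \cite[Lemma 1.5.1]{HMX}, $\vol(X,K_X+B)=\vol(X,K_X+B\wedge\mathbf{L}_{f_*B,X})$, and because $Z'$ terminalizes $(Z,(1-\epsilon)D)$ and $f_*B\leq(1-\epsilon)D$, the divisor $\mathbf{L}_{f_*B,X}$ vanishes along every divisor exceptional over $Z'$; hence the coefficients of $B$ along all such ``deep'' divisors can be set to $0$ without changing the volume. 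A further point you miss is that one terminalization does not suffice to push the volume down: $(Z',g_*B)$ need not be terminal, since $g_*B$ can exceed the positive part of the crepant pullback along $g$-exceptional divisors. The paper therefore passes to a second terminalization $Z''\to Z'$ of $(Z',(1-\epsilon)\mathbf{M}_{D,Z'})$, on which $(Z'',h_*B)$ is terminal, so that part (2) of \cite[Lemma 1.5.1]{HMX} gives $\vol(X,K_X+B)=\vol(Z'',K_{Z''}+h_*B)$; finiteness then follows because $h_*B$ is supported on the fixed divisor $\mathbf{M}_{D,Z''}$ with coefficients in the finite set $\Lambda$. (The preliminary reduction that $X$ may be assumed to dominate any prescribed higher model uses $\vol(X,K_X+B)=\vol(\hat X,K_{\hat X}+\mathbf{M}^\epsilon_{B,\hat X})$, again part (3) of the same lemma.) Without the $\mathbf{L}$-step your argument has a genuine gap at its central point.
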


\begin{proof}
We may assume that $1-\epsilon \in \Lambda$. Notice that no component of $B$ can have coefficient strictly greater than $1-\epsilon$. Fix $(X,B) \in \mathfrak{Z}$, and let $\hat{Z} \rar Z$ be a birational model obtained by blowing up strata of $\bM D.$. Then, by blowing up strata of $\bM D.$, we may find a higher model $\phi:\hat{X} \rar X$ that maps to $\hat{Z}$. Then, by part (3) of \cite[Lemma 1.5.1]{HMX}, we have
$$
\vol(X,\K X. + B) = \vol (\hat X,\K \hat X. + \bM B, \hat X.^\epsilon ).
$$
Thus, in order to show that $\mathfrak{V}$ is finite, we are free to restrict our attention to the subset of pairs $(X,B) \in \mathfrak{Z}$ that admit a morphism to a given higher model of $Z$. 

Let $\pi: Z' \rar Z$ be a terminalization of $(Z,(1-\epsilon)D)$ obtained by blowing up strata of $\bM D.$. Let $(X,B) \in \mathfrak{Z}$ be such that $f : X \rar Z$ factors through $g: X \rar Z'$. By part (3) of \cite[Lemma 1.5.1]{HMX}, we have
\begin{equation} \label{equation some coeff are 0}
\vol(X,\K X. + B) = \vol (X,\K X. + B \wedge \bL f_*B,X.).
\end{equation}
Since $0 \leq f_*B \leq (1-\epsilon)D$, by equation \eqref{equation some coeff are 0} we may assume that the coefficients in $B$ of the $g$-exceptional divisors are 0.

Thus, by what observed so far, we can obtain any volume in $\mathfrak{V}$ with a pair $(X,B) \in \mathfrak{Z}$ admitting a morphism $g : X \rar Z'$, and such that the coefficients in $B$ of the $g$-exceptional divisors are 0. Now, let $\nu : Z'' \rar Z'$ be a terminalization of $(Z',(1-\epsilon)\bM D,Z'.)$ obtained by blowing up strata of $\bM D.$. Furthermore, we may assume that $X$ admits a morphism $h : X \rar Z''$.

As the $g$-exceptional divisors in $B$ have 0 as coefficient, we have that $(Z'',h_*B)$ is terminal. Therefore, we have
$$
\K X. + B = h^*(\K Z''. + h_* B) + E,
$$
where $E$ is effective and $h$-exceptional. By part (2) of \cite[Lemma 1.5.1]{HMX}, we have
$$
\vol(X,\K X. + B) = \vol (Z'',\K Z''. + h_*B).
$$
Therefore, all the volumes in $\mathfrak{V}$ are computed by pairs supported by a fixed log-smooth pair, i.e. $(Z'',\bM D,Z''.)$, and having coefficients in the finite set $\Lambda$. As there are just finitely many combinations for the coefficients, we conclude that $\mathfrak{V}$ is finite.
\end{proof}

Now, we are ready to prove Theorem \ref{teo discrete}.

\begin{proof}[Proof of Theorem \ref{teo discrete}]
We may assume $1-\epsilon \in \Lambda$. By \cite[Theorem 3.0.1]{HMX}, we know that $\mathfrak{V}(d,\Lambda,\epsilon)$ is a DCC set. Therefore, it is enough to show that $\mathfrak{V}(d,\Lambda,\epsilon)$ has no accumulation points from below. Thus, assume by contradiction that there is a strictly increasing sequence $\lbrace v_i \rbrace \subs i \geq 1. \subset \mathfrak{V}(d,\Lambda,\epsilon)$ with limit $v < + \infty$. Also, let $\lbrace (X_i,B_i) \rbrace \subs i \geq 1. \subset \mathfrak{D}(d,\Lambda,\epsilon)$ be a sequence of pairs such that $\vol(X_i,\K X_i. + B_i)=v_i$.

By \cite[Theorem 3.0.1]{HMX} and \cite[Proposition 1.10.4]{HMX}, the pairs $(X_i,B_i)$ are log birationally bounded. Up to passing to a subsequence, we may assume that there exists a bounding family $(\Xf,\Bf) \rar T$ that is log smooth over $T$ with $T$ irreducible. We will write $(\Xf_{i},\Bf_{i})$ for the pair corresponding to $(X_i,\Supp(B_i))$ in the sense of Definition \ref{definizione}.

Now, by the proof of Proposition \ref{finite over fixed model}, for each $i$ there exists a sequence of blow-ups along strata of $\mathbf{M}_{\mathcal{B}_i}$, denoted by $\mathcal{W}_i\sups (i). \rightarrow \mathcal{X}_i$, satisfying the following property: Given any higher model $(\tilde{\mathcal{X}}_i,\tilde{B}) \xrightarrow{\alpha} \mathcal{X}_i$ obtained by blow-ups along strata of $\mathbf{M}_{\mathcal{B}_i}$ and such that $\alpha_ * \tilde{B} \leq \mathcal{B}_i$, with $\coeff(\tilde{B}) \subset \Lambda$, then we have
$$
\vol(\mathcal{W}_i \sups (i).,K_{\mathcal{W}_i\sups (i).}+\mathbf{M}^\epsilon_{\tilde{B},\mathcal{W}_i\sups (i).}) = \vol(\tilde{\mathcal{X}}_i,K_{\tilde{\mathcal{X}}_i}+\tilde{B}).
$$ 

Since $(\mathcal{X},\mathcal{B})$ is log smooth over $T$, we can assume that $\mathcal{W}_i \sups (i).$ appears as fiber of a sequence of blow-ups $p: \mathcal{W}\sups (i).\rightarrow \mathcal{X}$ along strata of $\mathbf{M}_{\mathcal{B}}$. Then, let $\Psi \sups (i).$ be the unique divisor supported on $\mathbf{M}_{\mathcal{B},\mathcal{W} \sups (i).}$ such that $\Psi \sups (i)._i = \bM B_i, \mathcal{W}_i^{(i)}.^\epsilon$. Then, we have
$$
v_i=\vol(\mathcal{W}\sups(i)._i,\K \mathcal{W}^{(i)}_i. + \Psi \sups (i)._i) = \vol(\mathcal{W}\sups(i)._1,\K \mathcal{W}^{(i)}_1. + \Psi \sups (i)._1).
$$
The second equality holds by \cite[Theorem 1.9.2]{HMX}, while the first one follows from \cite[proof of Corollary 2.1.3]{HMX}. Notice that in \cite[proof of Corollary 2.1.3]{HMX} $\bM.$ is used, while here we are free to use $\bM.^\epsilon$, as all pairs are $\epsilon$-log canonical.

Thus, up to replacing $(X_i,B_i)$ with $(\mathcal{W}\sups(i)._1,\Psi \sups (i)._1)$, we may assume that we are in the situation of Proposition \ref{finite over fixed model}. In particular, $(\Xf_1,\Bf_1)$ plays the role of the model $(Z,D)$ appearing in the statement. By abusing notation, we will denote $(\Xf_1,\Bf_1)$ by $(Z,D)$.

Then, by Proposition \ref{finite over fixed model}, the $v_i$'s can attain just finitely many values. Thus, we get a contradiction. Therefore, the claim follows.
\end{proof}

We conclude by discussing the approach to Theorem \ref{teo discrete} suggested by Chen Jiang.

\begin{remark} \label{remark alternative}
As discussed in the proof of Theorem \ref{teo accumulation}, it is enough to show that the $\mathfrak{V}(d,\Lambda,\epsilon) \cap [0,v]$ is finite for every $v > 0$. Fix $v > 0$, and let $\mathfrak{D}(d,\Lambda,\epsilon,v)$ be the set of pairs $(X,B) \in \mathfrak{D}(d,\Lambda,\epsilon)$ such that $\vol(X,\K X. + B) \leq v$. 

By the same arguments as in the proof of Theorem \ref{teo accumulation}, $\mathfrak{D}(d,\Lambda,\epsilon,v)$ is a log birationally bounded family of pairs. In particular, the corresponding log canonical models form a log birationally bounded family. Notice that, by the assumptions on the pairs $(X,B)$, the corresponding log canonical models are $\epsilon$-log canonical.

Now, by \cite[Theorem 1.6]{HMX14b}, the log canonical models of the pairs in the set $\mathfrak{D}(d,\Lambda,\epsilon,v)$ form a log bounded family. Then, by log boundedness of the family of pairs and finiteness of $\Lambda$, we conclude that $\mathfrak{V}(d,\Lambda,\epsilon) \cap [0,v]$ is finite.
\end{remark}

\printbibliography

\Addresses

\end{document}